\pgfplotsset{mystyle/.style={color=red,no marks,line width=0.75pt}}
\g@addto@macro\normalsize{%
  \setlength\abovedisplayskip{10pt}
  \setlength\belowdisplayskip{10pt}
  \setlength\abovedisplayshortskip{5pt}
  \setlength\belowdisplayshortskip{8pt}
}
\newtheoremstyle{normal}
{5pt}
{5pt}
{\normalfont}
{}
{\bfseries}
{}
{0.4em}
{\bfseries{\thmname{#1}\thmnumber{ #2}.\thmnote{ \hspace{0.5em}(#3)\newline}}}
\newtheoremstyle{kursiv}
{5pt}
{5pt}
{\itshape}
{}
{\bfseries}
{}
{0.4em}
{\bfseries{\thmname{#1}\thmnumber{ #2}.\thmnote{ \hspace{0.5em}(#3)\newline}}}
\theoremstyle{normal}
\newtheorem{thm}{Theorem}
\newtheorem{ex}[thm]{Example}
\newtheorem{cor}[thm]{Corollary}
\newtheorem{lem}[thm]{Lemma}
\newcommand{\dd}{\mathrm{d}}
\renewcommand{\epsilon}{\varepsilon}
\newcommand{\omegapr}{\underline{\omega}}
\begin{document}

\title{Variations on Barb\u{a}lat's Lemma}

\author{B\'{a}lint Farkas\hspace{0.5pt}\MakeLowercase{$^{\text{1,\,a}}$} and Sven-Ake Wegner\hspace{0.5pt}\MakeLowercase{$^{\text{2}}$}}

\renewcommand{\thefootnote}{}
\hspace{-1000pt}\footnote{\hspace{5.5pt}2010 \emph{Mathematics Subject Classification}: Primary 26A06, Secondary 26A12, 26A16, 26A42, 46E39.}
\hspace{-1000pt}\footnote{\hspace{5.5pt}\emph{Key words and phrases}: Barb\u{a}lat's Lemma, H\"older continuous functions, Sobolev spaces.\vspace{1.6pt}}

\hspace{-1000pt}\footnote{\hspace{0pt}$^{1}$\,Bergische Universit\"at Wuppertal, FB C -- Mathematik, Gau\ss{}stra\ss{}e 20, 42119 Wuppertal, Germany, Phone:\hspace{1.2pt}\hspace{1.2pt}+49\hspace{1.2pt}(0)\linebreak\phantom{x}\hspace{1.2pt}202\hspace{1.2pt}/\hspace{1.2pt}439\hspace{1.2pt}-\hspace{1.2pt}2673, Fax:\hspace{1.2pt}\hspace{1.2pt}+49\hspace{1.2pt}(0)\hspace{1.2pt}202\hspace{1.2pt}/\hspace{1.2pt}439\hspace{1.2pt}-\hspace{1.2pt}3724, E-Mail: farkas@uni-wuppertal.de.\vspace{1.6pt}}

\hspace{-1000pt}\footnote{\hspace{0pt}$^{2}$\,Nazarbayev University, Department of Mathematics, School of Science and Technology, 53 Kabanbay Batyr Ave,\linebreak\phantom{x}\hspace{1.2pt}010000 Astana, Kazakhstan, Phone: +7\hspace{1.2pt}(8)\hspace{1.2pt}7172\hspace{1.2pt}/\hspace{1.2pt}69\hspace{1.2pt}-\hspace{1.2pt}4671, E-Mail: svenake.wegner@nu.edu.kz.\vspace{1.6pt}}

\hspace{-1000pt}\footnote{\hspace{0pt}$^{a}$\,B\'{a}lint Farkas was supported by the Hungarian Research Fund (OTKA 100461).}

\begin{abstract}
	It is not hard to prove that a uniformly continuous real function, whose integral up to infinity exists, vanishes at infinity, and it is  probably little known that 	this statement runs under the name \textquotedblleft{}Barb\u{a}lat's Lemma.\textquotedblright{} In fact, the latter name is frequently used in control theory, where the lemma is used to obtain Lyapunov-like stability theorems for non-linear and non-autonomous systems. Barb\u{a}lat's Lemma is \emph{qualitative} in the sense that it asserts that a function has certain properties, here convergence to zero. Such qualitative statements can typically be proved by \emph{``soft analysis''}, such as indirect proofs. Indeed, in the original 1959 paper by Barb\u{a}lat, the lemma was proved by contradiction and this proof prevails in the control theory textbooks. In this short note we first give a direct, \emph{``hard analyis''} proof of the lemma, yielding \emph{quantitative} results, i.e.{} \emph{rates} of convergence to zero. This proof allows also for immediate generalizations. Finally, we unify three different versions which recently appeared and discuss their relation to the original lemma.
\end{abstract}

\maketitle


\section{Direct proof of Barb\u{a}lat's Lemma}\label{SEC-I}

In 1959, Barb\u{a}lat formalized the intuitive principle that a function whose integral up to infinity exists, and whose oscillation is bounded, needs to be small at infinity.

\smallskip

\begin{thm}\label{THM-1}(Barb\u{a}lat's Lemma \cite[p.~269]{Barbalat}) Suppose that $f\colon[0,\infty)\rightarrow\mathbb{R}$ is uniformly continuous and that $\lim_{t\rightarrow\infty}\int_0^tf(\tau)\dd\tau$ exists. Then $\lim_{t\rightarrow\infty}f(t)=0$ holds.
\end{thm}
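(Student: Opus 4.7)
The plan is to give a direct, quantitative proof built on a single elementary identity. For any $t \geq 0$ and any $h > 0$, writing $f(t)$ as the average of the constant $f(t)$ over $[t,t+h]$ yields
\[
 f(t) \;=\; \frac{1}{h}\int_{t}^{t+h} \bigl(f(t)-f(\tau)\bigr)\,\dd\tau \;+\; \frac{1}{h}\int_{t}^{t+h} f(\tau)\,\dd\tau.
\]
This decomposition is the heart of the matter: the first summand is controlled by the modulus of continuity of $f$ at scale $h$, while the second is a short tail of the improper integral and therefore vanishes as $t\to\infty$ for fixed $h$.

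More precisely, I would let $\omega(h)=\sup\{|f(t)-f(s)| : |t-s|\leq h\}$ denote the modulus of continuity, which by uniform continuity satisfies $\omega(h)\to 0$ as $h\to 0^{+}$. From the identity above one immediately obtains
\[
 |f(t)| \;\leq\; \omega(h) \;+\; \frac{1}{h}\left|\int_{t}^{t+h} f(\tau)\,\dd\tau\right|
\]
for every $t\geq 0$ and every $h>0$. Given $\epsilon>0$, I would first choose $h>0$ so small that $\omega(h)\leq \epsilon/2$, and then invoke the Cauchy criterion for the existence of $\lim_{t\to\infty}\int_0^t f$ to select $T=T(\epsilon,h)$ with $\bigl|\int_{t}^{t+h} f(\tau)\,\dd\tau\bigr|\leq h\epsilon/2$ for all $t\geq T$; combining both bounds gives $|f(t)|\leq\epsilon$ for $t\geq T$.

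The benefit of proceeding this way, as advertised in the introduction, is that the inequality $|f(t)|\leq \omega(h)+h^{-1}\bigl|\int_t^{t+h} f\bigr|$ is fully quantitative: for every concrete modulus of continuity $\omega$ and every concrete rate of convergence of the improper integral, optimizing the choice of $h=h(t)$ converts them directly into a rate of decay for $|f(t)|$. I expect no genuine obstacle in this argument. The one step that requires a moment of thought is the order in which the two parameters are chosen, namely that $h$ must be fixed \emph{before} $T$, so that the Cauchy criterion can be applied to a single, fixed length of integration window; once this is observed, the proof writes itself without any recourse to contradiction.
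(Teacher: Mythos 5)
Your proof is correct and rests on exactly the same averaging decomposition that the paper uses in its Lemma 1, namely $|f(t)|\leqslant \omega(h)+h^{-1}\bigl|\int_t^{t+h}f(\tau)\,\dd\tau\bigr|$. The only difference is that the paper immediately makes the self-tuning choice $h=S(t)^{1/2}$ with $S(t)=\sup_{s\geqslant t}\bigl|\int_t^s f(\tau)\,\dd\tau\bigr|$, which yields the explicit rate $|f(t)|\leqslant S(t)^{1/2}+\omega(S(t)^{1/2})$, whereas you run a two-parameter $\epsilon$ argument and defer the optimization of $h=h(t)$ to a closing remark.
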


\smallskip

Barb\u{a}lat's original proof, as well as its reproductions in textbooks, e.g., Khalil \cite[p.~192]{K}, Popov \cite[p.~211]{Popov} and Slotine, Li \cite[p.~124]{SL}, are by contradiction. Our first aim in this note is to give a direct proof of Theorem \ref{THM-1} which also reveals the essence of the statement and enables us to generalize Barb\u{a}lat's Lemma to vector valued functions without difficulty. This is based on the following two lemmas.

\smallskip

\begin{lem}\label{LEM-1} Let $f:[0,\infty)\to\mathbb{R}$ be a continuous function. We define $S\colon[0,\infty)\rightarrow\mathbb{R}$ via $S(t):=\sup_{s\geqslant{}t}\bigl|\int_t^sf(\tau)\dd\tau\bigr|$ and put $\omegapr(a,b,\delta):=\sup\{|f(x)-f(y)|\; | \; x,\,y\in (a,b) \text{ with } |x-y|\leqslant \delta\}$ for $a<b\leqslant\infty$ and $\delta\geqslant 0$. Then $|f(t)|\leqslant S(t)^{1/2}+\omegapr(t,t+S(t)^{1/2},S(t)^{1/2})$ holds for $t\geqslant 0$.
\end{lem}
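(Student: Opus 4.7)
The plan is to run a short proof by contradiction after reducing to the generic case. First I would dispose of the boundary cases: if $S(t)=\infty$ the asserted inequality is vacuous, while if $S(t)=0$, then $\int_t^sf(\tau)\dd\tau=0$ for every $s\geqslant t$, and differentiating this constant primitive shows $f\equiv 0$ on $[t,\infty)$, so both sides vanish. Consequently I may assume $0<S(t)<\infty$ and abbreviate $h:=S(t)^{1/2}$, so that the inequality to prove reads $|f(t)|\leqslant h+\omegapr(t,t+h,h)$.

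Next I would suppose, towards a contradiction, that $|f(t)|>h+\omegapr(t,t+h,h)$, and, replacing $f$ by $-f$ if necessary, assume $f(t)>h+\omegapr(t,t+h,h)$. The key claim is that this strict lower bound propagates to the open interval: $f(x)>h$ for every $x\in(t,t+h)$. To obtain it, I would fix such an $x$, pick a sequence $y_n\in(t,t+h)$ with $y_n\to t^+$, and note that for $n$ large $|x-y_n|\leqslant h$, so by the very definition of $\omegapr$ we have $|f(x)-f(y_n)|\leqslant \omegapr(t,t+h,h)$; letting $n\to\infty$ and invoking the continuity of $f$ at $t$ yields $|f(x)-f(t)|\leqslant\omegapr(t,t+h,h)$, hence $f(x)\geqslant f(t)-\omegapr(t,t+h,h)>h$ as claimed.

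Integrating the pointwise lower bound over $(t,t+h)$ then produces
\[
\int_t^{t+h}f(\tau)\dd\tau\;>\;h\cdot h\;=\;h^2\;=\;S(t),
\]
which contradicts the defining inequality $S(t)\geqslant\bigl|\int_t^{t+h}f(\tau)\dd\tau\bigr|$, and finishes the proof.

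The only mildly delicate point is the mismatch between the open interval $(t,t+h)$ appearing in the definition of $\omegapr$ and the fact that the estimate must be applied at the left endpoint $t$; the continuity of $f$ together with the sequential approximation above is exactly what bridges this gap. Everything else is a routine comparison between the controlled oscillation of $f$ near $t$ and the hypothesis that the tail integrals starting at $t$ stay bounded in modulus by $S(t)$.
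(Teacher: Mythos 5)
Your proof is correct: the boundary cases are handled properly, the WLOG reduction to $f(t)>h+\omegapr(t,t+h,h)$ is legitimate (both $S$ and $\omegapr$ are invariant under $f\mapsto -f$), the sequential argument bridging the left endpoint $t$ with the open interval $(t,t+h)$ is sound (and in fact more careful on this point than the paper, which silently applies the oscillation bound to $|f(t)-f(\tau)|$ with $t\notin(t,t+h)$), and the strict inequality $\int_t^{t+h}f(\tau)\dd\tau>h^2=S(t)$ does follow from $f>h$ on $(t,t+h)$ by continuity, contradicting the definition of $S(t)$. The route differs from the paper's in logical structure, though: the paper argues \emph{directly}, writing $f(t)$ as the average $\frac{1}{h}\int_t^{t+h}f(t)\dd\tau$ and splitting off $\frac{1}{h}\bigl|\int_t^{t+h}(f(t)-f(\tau))\dd\tau\bigr|\leqslant\omegapr(t,t+h,h)$ via the triangle inequality, which yields the bound in three lines. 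Your argument is the contrapositive of exactly this averaging estimate, packaged as a proof by contradiction. Since the statement being proved is itself a quantitative inequality, nothing is lost — your contradiction is purely local and still delivers the rate — but it is worth noting that the entire advertised point of this section of the paper is to replace the traditional indirect proofs of Barb\u{a}lat's Lemma by a direct one; the direct triangle-inequality computation is both shorter and more in the spirit of what the lemma is for.
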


\smallskip

\begin{proof} Fix $t\geqslant0$. If $S(t)=0$, then $f(t)=0$ and the assertion follows immediately. Also, if $S(t)=\infty$, then there is nothing to prove. Suppose therefore $0<S(t)<\infty$, put $s=S(t)^{1/2}>0$, and compute
\begin{eqnarray*}
|f(t)|&=&\frac 1s\Bigl|\int_t^{t+s} f(t) \dd\tau\Bigr|\\
&\leqslant& \frac 1s\Bigl|\int_t^{t+s} f(\tau) \dd\tau\Bigr|+\frac 1s\Bigl|\int_t^{t+s} \bigl(f(t) -f(\tau)\bigr)\dd\tau\Bigr|\\
&\leqslant& \frac 1s\Bigl|\int_t^{t+s} f(\tau) \dd\tau\Bigr|+\omegapr(t,t+s,s)\\
&\leqslant& \frac 1s S(t)+{\omegapr}(t,t+s,s) \\
&=& \; S(t)^{1/2}+\omegapr(t,t+S(t)^{1/2},S(t)^{1/2})
\end{eqnarray*}
as desired.
\end{proof}

\smallskip

We recall that given a uniformly continuous function $f\colon[0,\infty)\rightarrow\mathbb{R}$, a function $\omega\colon[0,\infty)\rightarrow\mathbb{R}$ is said to be a \emph{modulus of continuity} for $f$ if $\lim_{t\rightarrow 0}\omega(t)=\omega(0)=0$ and $|f(t)-f(\tau)|\leqslant\omega(|t-\tau|)$  for all $t$, $\tau\in[0,\infty)$.

\smallskip

\begin{lem}\label{LEM-2} Let $f:[0,\infty)\to\mathbb{R}$ be uniformly continuous and let $\omega$ be a modulus of continuity for $f$. Consider $S(t)=\sup_{s\geqslant{}t}|\int_t^sf(\tau)\dd\tau|$. Then we have $|f(t)|\leqslant S(t)^{1/2}+\omega(S(t)^{1/2})$ for all $t\geqslant0$.
\end{lem}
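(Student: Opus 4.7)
The plan is to derive Lemma~\ref{LEM-2} directly from Lemma~\ref{LEM-1} by showing that the abstract local oscillation quantity $\omegapr(t,t+s,s)$ appearing in Lemma~\ref{LEM-1} is controlled by any modulus of continuity $\omega$ for $f$, evaluated at $s$. Concretely, I would apply Lemma~\ref{LEM-1} with $s:=S(t)^{1/2}$ and then estimate $\omegapr(t,t+s,s)\leqslant\omega(s)$; substituting the latter into the bound of Lemma~\ref{LEM-1} produces the desired inequality $|f(t)|\leqslant S(t)^{1/2}+\omega(S(t)^{1/2})$.

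For the key inequality $\omegapr(t,t+s,s)\leqslant \omega(s)$, I would argue as follows. For every pair $x,y\in(t,t+s)$ with $|x-y|\leqslant s$ the defining property of a modulus of continuity gives $|f(x)-f(y)|\leqslant\omega(|x-y|)$. Taking the supremum over such pairs yields $\omegapr(t,t+s,s)\leqslant\sup_{0\leqslant r\leqslant s}\omega(r)$, and since $\omega$ may without loss of generality be assumed non-decreasing (otherwise replace $\omega$ by its monotone envelope $\omega^*(r):=\sup_{0\leqslant u\leqslant r}\omega(u)$, which still vanishes continuously at $0$ and dominates $|f(x)-f(y)|$), this supremum equals $\omega(s)$.

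The cases $S(t)=0$ and $S(t)=\infty$ are handled exactly as in the proof of Lemma~\ref{LEM-1}: in the first case $f(t)=0$ and the inequality is trivial, while in the second case the right-hand side is infinite.

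The only conceptual subtlety, which I would flag explicitly, is the passage from the \emph{a priori} non-monotone $\omega$ to its non-decreasing upper envelope, or equivalently the observation that one may always choose the canonical modulus $\omega_f(r)=\sup\{|f(x)-f(y)|\,:\,|x-y|\leqslant r\}$ in place of $\omega$; apart from this, no calculation is required, and the proof is a one-line corollary of Lemma~\ref{LEM-1}.
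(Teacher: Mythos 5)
Your proof is correct and is essentially the paper's own argument: apply Lemma~\ref{LEM-1} with $s=S(t)^{1/2}$ and bound the local oscillation $\omegapr(t,t+s,s)$ by the modulus evaluated at $s$ (the paper routes this through $\omegapr(t,t+s,s)\leqslant\omegapr(0,\infty,s)\leqslant\omega(s)$). The monotonicity subtlety you flag is genuine, but it affects the paper's step $\omegapr(0,\infty,s)\leqslant\omega(s)$ in exactly the same way --- the paper tacitly treats $\omega$ as non-decreasing --- so your explicit treatment via the monotone envelope (or the canonical modulus) is, if anything, slightly more careful; just note that if $\omega$ were genuinely non-monotone, passing to $\omega^*\geqslant\omega$ proves the bound with $\omega^*$ rather than with $\omega$ itself.
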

\begin{proof} Let $f$ and $\omega$ be given. We define $\underline{\omega}(t):=\omegapr(0,\infty,t)$, where $\omegapr(0,\infty,\cdot)$ is the function defined in Lemma \ref{LEM-1}. Then, $\underline{\omega}(t)\leqslant\omega(t)$ holds for all $t\geqslant0$. Since $\omegapr(a,b,t)\leqslant\omegapr(0,\infty,t)$ holds for all $a<b\leqslant\infty$, we can use Lemma \ref{LEM-1} to obtain
$$
|f(t)|\leqslant S(t)^{1/2}+{\omegapr}(t,t+S(t)^{1/2},S(t)^{1/2})\leqslant  S(t)^{1/2}+\underline{\omega}(S(t)^{1/2})\leqslant S(t)^{1/2}+\omega(S(t)^{1/2})
$$
as desired.
\end{proof}

We now give the direct proof of Barb\u{a}lat's Lemma.

\medskip

\emph{Proof of Theorem \ref{THM-1}.} Let $f$ be uniformly continuous and $\omega$ be a modulus of continuity for $f$. By the Cauchy criterion for indefinite integrals, we obtain (with a direct proof!) that $S(t)\rightarrow0$ for $t\rightarrow\infty$. Therefore, Lemma \ref{LEM-2} yields $|f(t)|\leqslant S(t)^{1/2}+\omega(S(t)^{1/2})\rightarrow 0$ for $t\rightarrow\infty$.\hfill\qed 

\medskip

Since Lemmas \ref{LEM-1} and \ref{LEM-2} remain true for functions with values in a Banach space $E$ (the proofs are verbatim the same), we immediately obtain the next generalization.

\smallskip

\begin{thm}\label{THM-12} Let $E$ be a Banach space and suppose that $f\colon[0,\infty)\rightarrow E$ is uniformly continuous such that  $\lim_{t\rightarrow\infty}\int_0^tf(\tau)\dd\tau$ exists. Then $\lim_{t\rightarrow\infty}f(t)=0$ holds.\hfill\qed
\end{thm}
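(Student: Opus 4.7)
The plan is to repeat the proofs of Theorem~\ref{THM-1}, Lemma~\ref{LEM-1} and Lemma~\ref{LEM-2} verbatim, with the scalar absolute value replaced everywhere by the norm $\|\cdot\|_E$. For a continuous $f\colon[0,\infty)\to E$ the Riemann integral $\int_a^b f(\tau)\,\dd\tau$ is a well-defined element of $E$ (here Banach-space completeness enters), and the estimate $\|\int_a^b f(\tau)\,\dd\tau\|_E \leqslant \int_a^b \|f(\tau)\|_E\,\dd\tau$ remains valid. This is precisely what is needed to rerun the calculation of Lemma~\ref{LEM-1} with the $E$-valued versions $S(t):=\sup_{s\geqslant t}\|\int_t^s f(\tau)\,\dd\tau\|_E$ and $\omegapr(a,b,\delta):=\sup\{\|f(x)-f(y)\|_E \mid x,y\in(a,b),\,|x-y|\leqslant\delta\}$, since every step of that calculation only uses the triangle inequality for $\|\cdot\|_E$ (in both its pointwise and its integral form), the pointwise bound $\|f(t)-f(\tau)\|_E\leqslant\omegapr(t,t+s,s)$ on the interval of integration, and the defining property of $S$. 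The passage from Lemma~\ref{LEM-1} to Lemma~\ref{LEM-2} is purely formal, so for any modulus of continuity $\omega$ of $f$ one obtains $\|f(t)\|_E\leqslant S(t)^{1/2}+\omega(S(t)^{1/2})$ for all $t\geqslant 0$.

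It then remains to show $S(t)\to 0$ as $t\to\infty$. This is the Cauchy criterion applied to $F(t):=\int_0^t f(\tau)\,\dd\tau$: since $F$ converges in the complete space $E$, for every $\epsilon>0$ there exists $T\geqslant 0$ such that $\|\int_t^s f(\tau)\,\dd\tau\|_E=\|F(s)-F(t)\|_E<\epsilon$ for all $s\geqslant t\geqslant T$, which gives $S(t)\leqslant\epsilon$ for $t\geqslant T$. Combining this with the inequality from the (vector-valued) Lemma~\ref{LEM-2} and with the fact that $\omega(\delta)\to 0$ as $\delta\to 0^+$ yields $\|f(t)\|_E\to 0$, as required.

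There is no serious obstacle: the only point where completeness of $E$ (as opposed to its mere normed structure) is genuinely invoked is the Cauchy criterion for $S(t)\to 0$, and every other step is the notational substitution of $|\cdot|$ by $\|\cdot\|_E$, exactly as the authors have already observed in the paragraph preceding the statement.
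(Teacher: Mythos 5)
Your proposal is correct and follows exactly the route the paper takes: the paper proves Theorem~\ref{THM-12} precisely by observing that Lemmas~\ref{LEM-1} and \ref{LEM-2} hold verbatim for Banach-space-valued functions and that the Cauchy criterion for the convergence of $\int_0^t f(\tau)\,\dd\tau$ gives $S(t)\to 0$. Your additional remarks on where completeness and the integral triangle inequality enter are accurate but do not change the argument.
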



\section{Barb\u{a}lat's Lemma in a different context}\label{SEC-II}

We pointed out that all proofs of Barb\u{a}lat's Lemma given in the relevant textbooks are indirect. On the other hand, there appeared recently several \textquotedblleft{}alternative versions\textquotedblright{} in the literature whose proofs, or hints for a proof, are based on direct estimates. Tao \cite[Lemma 1]{Tao} states that $\lim_{t\rightarrow\infty}f(t)=0$ holds, whenever $f\in L^2(0,\infty)$ and $f'\in L^{\infty}(0,\infty)$. Desoer and Vidyasagar \cite[Ex.~1 on p.~237]{DV} indicate that it is enough to require that $f$ and $f'$ are in $L^{2}(0,\infty)$ and Teel \cite[Fact 4]{Teel} notes that in the latter the Lebesgue exponent $2$ can be replaced by $p\in [1,\infty)$. Here, $f'$ can be interpreted in the sense of distributions or, equivalently, in the sense that $f$ is absolutely continuous with the almost everywhere existing derivative being essentially bounded.

\smallskip

Indeed, the three results extend the classical statement, that for $1\leqslant{}p<\infty$ all functions in the Sobolev space $W^{1,p}(0,\infty)$ tend to zero for $t\to \infty$ (see, e.g., Brezis \cite[Corollary 8.9]{Brezis}) to the \textquotedblleft{}mixed Sobolev space\textquotedblright{}
$$
W^{1,p,q}(0,\infty)=\bigl\{f\:|\:f\in L^p(0,\infty) \text{ and } f'\in L^q(0,\infty)\bigr\}
$$
for $p=2$, $q=\infty$, and $p=q\in[1,\infty)$, respectively. Our first aim in this section is to prove the following common generalization of the results of Tao \cite{Tao}, Desoer and Vidyasagar \cite{DV}, and Teel \cite{Teel}.
\medskip

\begin{thm}\label{THM-2} Let $p\in [1,\infty)$ and $q\in(1,\infty]$. Every function $f\in W^{1,p,q}(0,\infty)$ tends to zero at infinity.
\end{thm}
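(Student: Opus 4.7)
The plan is to combine the flexible one-parameter estimate implicit in the proof of Lemma~\ref{LEM-1} with a Hölder inequality that exploits both the $L^p$ and $L^q$ hypotheses. Note first that the assumption $f'\in L^q(0,\infty)$ with $q\in(1,\infty]$ endows (the continuous representative of) $f$ with a Hölder-type modulus of continuity. Indeed, applying Hölder's inequality to $f(y)-f(x)=\int_x^y f'(\tau)\dd\tau$ yields $|f(y)-f(x)|\leqslant \|f'\|_q\,|y-x|^{1-1/q}$ for all $x,y\geqslant 0$ (with the convention $1-1/\infty=1$), so that
$$
\omega(\delta):=\|f'\|_q\,\delta^{1-1/q}
$$
is a modulus of continuity for $f$.

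Next I would reread the proof of Lemma~\ref{LEM-1} and extract the fact that its first three lines of computation actually establish, for \emph{every} $s>0$ and every continuous $f$, the inequality
$$
|f(t)|\leqslant \frac 1s\Bigl|\int_t^{t+s}f(\tau)\dd\tau\Bigr|+\omega(s);
$$
the specific choice $s=S(t)^{1/2}$ was used only to rewrite the right-hand side in terms of $S$. Since the hypothesis $f\in L^p$ with $p>1$ does \emph{not} guarantee that $\int_0^{\infty}f$ converges, Lemma~\ref{LEM-2} is not directly applicable and this parametric form is the appropriate tool. Bounding the integral term via Hölder's inequality gives
$$
\frac 1s\Bigl|\int_t^{t+s}f(\tau)\dd\tau\Bigr|\leqslant s^{-1/p}\Bigl(\int_t^{t+s}|f|^p\Bigr)^{\!1/p}\leqslant s^{-1/p}\epsilon(t)^{1/p},
$$
where $\epsilon(t):=\int_t^{\infty}|f|^p$ tends to $0$ as $t\to\infty$ because $f\in L^p$ with $p<\infty$. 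Combining yields
$$
|f(t)|\leqslant s^{-1/p}\epsilon(t)^{1/p}+\|f'\|_q\,s^{1-1/q}\qquad\text{for every }s>0.
$$

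Finally I would choose $s$ as a function of $t$ that makes both summands vanish in the limit; for instance, $s(t):=\epsilon(t)^{a}$ for any fixed $a\in(0,1)$ (the case $\epsilon(t)=0$, which forces $f$ to vanish past $t$, being trivial) makes the first term equal to $\epsilon(t)^{(1-a)/p}\to 0$ and the second equal to $\|f'\|_q\,\epsilon(t)^{a(1-1/q)}\to 0$. Hence $f(t)\to 0$. I do not anticipate a serious obstacle; the only subtle point is the replacement of Lemma~\ref{LEM-2} by the parametric estimate from the proof of Lemma~\ref{LEM-1}, which is needed precisely because the $L^p$ hypothesis does not imply convergence of $\int_0^{\infty}f$. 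As a by-product, optimizing $s$ yields a quantitative rate $|f(t)|\lesssim \|f'\|_q^{\theta}\,\epsilon(t)^{(1-\theta)/p}$ for an explicit exponent $\theta=\theta(p,q)\in(0,1)$.
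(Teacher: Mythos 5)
Your proof is correct, but it takes a genuinely different route from the paper's. The paper proves Theorem \ref{THM-2} by reduction to the original Barb\u{a}lat Lemma: it first establishes (Lemma \ref{LEM-3}) that $f$ is bounded and H\"older continuous, deduces via the mean value theorem that $|f|^p$ is uniformly continuous, and then applies Theorem \ref{THM-1} to $|f|^p$, whose integral up to infinity converges simply because $|f|^p$ is nonnegative and integrable. You instead work with $f$ itself: you keep the averaging parameter $s$ in the proof of Lemma \ref{LEM-1} free, estimate the averaged integral by H\"older's inequality against the tail $\epsilon(t)=\int_t^{\infty}|f(\tau)|^p\,\dd\tau$, and then choose $s=\epsilon(t)^a$ with $a\in(0,1)$; this correctly circumvents the fact that $f\in L^p$ alone does not make $\int_0^{\infty}f$ converge, which is exactly the obstruction you identify, and your handling of the degenerate case $\epsilon(t)=0$ and of $q=\infty$ is fine. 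Each approach buys something: the paper's reduction is short modulo Lemma \ref{LEM-3} and feeds directly into the quantitative Corollary \ref{COR-1}, whereas yours bypasses the boundedness half of Lemma \ref{LEM-3} entirely (only the H\"older-continuity estimate is needed), avoids the detour through $|f|^p$, and, upon optimizing over $s$, yields a rate $|f(t)|\lesssim \|f'\|_q^{\theta}\,\epsilon(t)^{(q-1)/(pq-p+q)}$ that agrees with Corollary \ref{COR-1} for $p=1$ and has a slightly better exponent than the $(q-1)/(p(2q-1))$ one obtains from Corollary \ref{COR-1} when $p>1$.
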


\smallskip

Notice, that our proof below shows that all three alternatives are immediate consequences of the original Barb\u{a}lat Lemma. The latter cannot be applied \emph{a priori}, but in view of the following lemma.

\smallskip

\begin{lem}\label{LEM-3} Let $p\in [1,\infty)$ and $q\in (1,\infty]$ be arbitrary. A function $f\in W^{1,p,q}(0,\infty)$ is bounded and uniformly continuous. More precisely, $f$ is $\frac{q-1}q$-H\"older continuous if $q<\infty$ and Lipschitz-continuous if $q=\infty$.
\end{lem}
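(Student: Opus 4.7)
The plan is to first establish the Hölder/Lipschitz regularity, which will simultaneously deliver uniform continuity, and then to deduce boundedness from the $L^p$-integrability together with this regularity.

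For the regularity statement, I would exploit the fact that $f\in W^{1,p,q}(0,\infty)$ admits an absolutely continuous representative satisfying $f(t)-f(s)=\int_s^t f'(\tau)\dd\tau$ for all $0\leqslant s\leqslant t$. Let $q'$ denote the conjugate exponent of $q$, so that $1/q'=(q-1)/q$ when $q<\infty$ and $q'=1$ when $q=\infty$. Applying Hölder's inequality to the integral of $f'$ over $[s,t]$ yields
\[
|f(t)-f(s)|\leqslant \int_s^t |f'(\tau)|\dd\tau\leqslant \|f'\|_q\,|t-s|^{(q-1)/q},
\]
with the usual convention that the exponent is $1$ when $q=\infty$. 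This is precisely the claimed Hölder-continuity (resp.\ Lipschitz-continuity), and uniform continuity follows at once.

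For boundedness, I would argue that the Hölder condition prevents $|f|$ from having isolated spikes without contributing substantial $L^p$-mass. Fix $t_0\geqslant 0$ and set $M:=|f(t_0)|$. The regularity just proved gives $|f(t)|\geqslant M/2$ on $[t_0,t_0+\delta]$, where
\[
\delta=\Bigl(\frac{M}{2\|f'\|_q}\Bigr)^{q/(q-1)}
\]
(with the analogous Lipschitz version $\delta=M/(2\|f'\|_\infty)$ when $q=\infty$), assuming $\|f'\|_q>0$; if $\|f'\|_q=0$ then $f$ is constant and, being in $L^p$, identically zero. Integrating over this interval gives
\[
\|f\|_p^p\geqslant \Bigl(\frac{M}{2}\Bigr)^p\delta,
\]
which rearranges to an explicit bound on $M=|f(t_0)|$ in terms of $\|f\|_p$ and $\|f'\|_q$ only. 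Taking the supremum over $t_0\geqslant 0$ yields the desired boundedness.

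The main technical point is simply the choice of exponent in Hölder's inequality, which forces the exponent $(q-1)/q$; once that is in place, the boundedness step is a short one-spike computation. The only mild subtlety is that one must work with the continuous representative of $f$ and ensure the interval $[t_0,t_0+\delta]$ lies inside $[0,\infty)$, which is automatic since we extend to the right of $t_0$.
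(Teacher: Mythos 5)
Your proof is correct. The regularity part coincides with the paper's argument: both write $f(t)-f(s)=\int_s^t f'(\tau)\,\dd\tau$ for the absolutely continuous representative and apply H\"older's inequality with the conjugate exponent $q'$ to obtain the $(q-1)/q$-H\"older (resp.\ Lipschitz) estimate $|f(t)-f(s)|\leqslant \|f'\|_q|t-s|^{1/q'}$. Where you genuinely diverge is the boundedness step. The paper differentiates $|f|^{r+1}$ with $r=p(q-1)/q$, integrates from $0$ to $x$, and applies H\"older once more (using $rq'=p$) to get the interpolation-type bound $|f(x)|^{r+1}\leqslant |f(0)|^{r+1}+(r+1)\|f\|_p^{r}\|f'\|_q$. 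You instead run a ``spike'' argument: if $|f(t_0)|=M$, the H\"older estimate forces $|f|\geqslant M/2$ on an interval of length $\delta=(M/(2\|f'\|_q))^{q/(q-1)}$ to the right of $t_0$, whence $\|f\|_p^p\geqslant (M/2)^p\delta$ and $M$ is bounded in terms of $\|f\|_p$ and $\|f'\|_q$ alone. Both arguments are sound; yours is more elementary (no need to justify the chain rule for $|f|^{r+1}$ along an absolutely continuous function) and yields a bound on $\|f\|_\infty$ depending only on the two norms rather than additionally on $|f(0)|$, while the paper's version is the standard Gagliardo--Nirenberg-style computation and produces a cleaner closed-form constant. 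You also correctly dispose of the degenerate case $\|f'\|_q=0$ and of the endpoint issue by extending the interval to the right.
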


\begin{proof} For the proof let $q'\in[1,\infty)$ be such that $1/q'+1/q=1$ holds, where we use the convention $1/\infty=0$. In particular, we read $(q-1)/q=1/q'=1$ if $q=\infty$. By our assumptions we have 
$$
f(y)-f(x)=\int_x^y f'(s)\dd s
$$
for almost every $x,y\in [0,\infty)$. Thus, $f$ can be identified with a continuous function satisfying
$$
	 |f(x)-f(y)|\leqslant \Bigl|\int_x^y f'(s)\dd s\Bigr|\leqslant |x-y|^{1/q'}\|f'\|_q.
$$
Here we used H\"older's inequality for $q,q'$ with $1/q+1/q'=1$, so that $f$ is indeed H\"older continuous with exponent $1/q'=(q-1)/q$. Let $r:=p(q-1)/q$. Then we have $r>0$ and 
$$
\tfrac{\dd}{\dd x}|f(x)|^{r+1}=(r+1)|f(x)|^{r-1}f(x)f'(x)
$$
holds. For $x\in [0,\infty)$ we thus obtain
$$
|f(x)|^{r+1}=|f(0)|^{r+1}+(r+1)\int_0^x|f(x)|^{r-1}f(x)f'(x)\,\dd s
\leqslant |f(0)|^{r+1}+(r+1)\|f\|_p^{r}\cdot \|f'\|_q,
$$ 
where the last step is again an application of H\"older's inequality for $q,q'$ with $1/q+1/q'=1$.
\end{proof}

\smallskip

Lemma \ref{LEM-3} enables us to employ the original Barb\u{a}lat Lemma to prove Theorem \ref{THM-2}.

\bigskip

\emph{Proof of Theorem \ref{THM-2}.}  By Lemma \ref{LEM-3} the function $f$ is bounded and uniformly continuous, hence so is $|f|^p$.  Indeed, we have by the mean-value theorem
		\begin{equation}\label{eq:meanvalue}
	\bigl||f(x)|^p-|f(y)|^p\bigr|\leq 	\sup_{|t|\leqslant \|f\|_\infty}p |t|^{p-1}\cdot |f(x)-f(y)|=p\|f\|_\infty^{p-1}|f(x)-f(y)|.
		\end{equation}
		This inequality implies the asserted uniform continuity of $|f|^p$. By assumption, we can apply Barb\u{a}lat's Lemma and obtain the statement.\hfill\qed

\bigskip

Tao's formulation \cite[3rd paragraph on p.~698]{Tao} might erroneously establish the impression that his alternative \cite[Lemma 1]{Tao} uses a weaker assumption than Barb\u{a}lat's Lemma, but has the same conclusion. Our second aim in this section is to illustrate that this is not the case. The following example of a function which satisfies the assumptions of Barb\u{a}lat's Lemma, but not those of \cite[Lemma 1]{Tao} combines two effects. The difference between Lebesgue and improper Riemann integral on the one hand, and that between uniform continuity and having a bounded derivative on the other.

\smallskip

\begin{ex}\label{EXp} Let $f(x)=0$ for $x\in[0,2)$ and $f(x)=(-1)^nf_n(x)$ for $x\in[n,n+1)$ with $n\geqslant2$ and
$$
f_n(x)=\begin{cases}
\:(x-n)^{\frac{1}{2}},&x\in[n,n+\frac{1}{2}n^{-\frac{1}{3}}),\\
\:(n+n^{-\frac{1}{3}}-x)^{\frac{1}{2}},&x\in[n+\frac{1}{2}n^{-\frac{1}{3}},n+n^{-\frac{1}{3}}),\\
\;0,&x\in[n+n^{-\frac{1}{3}},n+1),
\end{cases}
$$
i.e., $f$ looks as follows.

\begin{center}
\begin{tikzpicture}[auto,domain=0:4,x=1cm,y=1cm] 
    \begin{axis}[width=270, height=90,style={font=\footnotesize},
        xmin=0,xmax=10.5,
        ymin=-1,ymax=1,
        axis x line=middle,
        axis y line=none,
        axis line style=->,
        xlabel={},
        ylabel={},
        ytick={},
    xtick={0,1,2,3,4,5,6,7,8,9,10},
    yticklabels={},
    xticklabels={0,,2,,4,,6,,8,,10},
        ]
        \addplot[mystyle] expression[domain=0:2,samples=100]{0} 
                    node[pos=0.65,anchor=south west]{};

        \addplot[mystyle] expression[domain=2:2+1/2*2^(-1/3),samples=100]{sqrt(x-2)} 
                    node[pos=0.65,anchor=south west]{}; 
        \addplot[mystyle] expression[domain=2+1/2*2^(-1/3):2+2^(-1/3),samples=100]{sqrt(2+2^(-1/3)-x)} 
                    node[pos=0.65,anchor=south west]{};
        \addplot[mystyle] expression[domain=2+2^(-1/3):3,samples=100]{0} 
                    node[pos=0.65,anchor=south west]{};

        \addplot[mystyle] expression[domain=3:3+1/2*3^(-1/3),samples=100]{-sqrt(x-3)} 
                    node[pos=0.65,anchor=south west]{}; 
        \addplot[mystyle] expression[domain=3+1/2*3^(-1/3):3+3^(-1/3),samples=100]{-sqrt(3+3^(-1/3)-x)} 
                    node[pos=0.65,anchor=south west]{};
        \addplot[mystyle] expression[domain=3+3^(-1/3):4,samples=100]{0} 
                    node[pos=0.65,anchor=south west]{};

        \addplot[mystyle] expression[domain=4:4+1/2*4^(-1/3),samples=100]{sqrt(x-4)} 
                    node[pos=0.65,anchor=south west]{}; 
        \addplot[mystyle] expression[domain=4+1/2*4^(-1/3):4+4^(-1/3),samples=100]{sqrt(4+4^(-1/3)-x)} 
                    node[pos=0.65,anchor=south west]{};
        \addplot[mystyle] expression[domain=4+4^(-1/3):5,samples=100]{0} 
                    node[pos=0.65,anchor=south west]{};

        \addplot[mystyle] expression[domain=5:5+1/2*5^(-1/3),samples=100]{-sqrt(x-5)} 
                    node[pos=0.65,anchor=south west]{}; 
        \addplot[mystyle] expression[domain=5+1/2*5^(-1/3):5+5^(-1/3),samples=100]{-sqrt(5+5^(-1/3)-x)} 
                    node[pos=0.65,anchor=south west]{};
        \addplot[mystyle] expression[domain=5+5^(-1/3):6,samples=100]{0} 
                    node[pos=0.65,anchor=south west]{};

        \addplot[mystyle] expression[domain=6:6+1/2*6^(-1/3),samples=100]{sqrt(x-6)} 
                    node[pos=0.65,anchor=south west]{}; 
        \addplot[mystyle] expression[domain=6+1/2*6^(-1/3):6+6^(-1/3),samples=100]{sqrt(6+6^(-1/3)-x)} 
                    node[pos=0.65,anchor=south west]{};
        \addplot[mystyle] expression[domain=6+6^(-1/3):7,samples=100]{0} 
                    node[pos=0.65,anchor=south west]{};
                   
        \addplot[mystyle] expression[domain=7:7+1/2*7^(-1/3),samples=100]{-sqrt(x-7)} 
                    node[pos=0.65,anchor=south west]{}; 
        \addplot[mystyle] expression[domain=7+1/2*7^(-1/3):7+7^(-1/3),samples=100]{-sqrt(7+7^(-1/3)-x)} 
                    node[pos=0.65,anchor=south west]{};
        \addplot[mystyle] expression[domain=7+7^(-1/3):8,samples=100]{0} 
                    node[pos=0.65,anchor=south west]{};
                 
        \addplot[mystyle] expression[domain=8:8+1/2*8^(-1/3),samples=100]{sqrt(x-8)} 
                    node[pos=0.65,anchor=south west]{}; 
        \addplot[mystyle] expression[domain=8+1/2*8^(-1/3):8+8^(-1/3),samples=100]{sqrt(8+8^(-1/3)-x)} 
                    node[pos=0.65,anchor=south west]{};
        \addplot[mystyle] expression[domain=8+8^(-1/3):9,samples=100]{0} 
                    node[pos=0.65,anchor=south west]{};  

        \addplot[mystyle] expression[domain=9:9+1/2*9^(-1/3),samples=100]{-sqrt(x-9)} 
                    node[pos=0.65,anchor=south west]{}; 
        \addplot[mystyle] expression[domain=9+1/2*9^(-1/3):9+9^(-1/3),samples=100]{-sqrt(9+9^(-1/3)-x)} 
                    node[pos=0.65,anchor=south west]{};
        \addplot[mystyle] expression[domain=9+9^(-1/3):10.3,samples=100]{0} 
                    node[pos=0.65,anchor=south west]{};       
\end{axis}
\end{tikzpicture}
\end{center}
\noindent{}Straightforward computations show that $\lim_{t\rightarrow\infty}\int_0^tf(\tau)\dd\tau=\frac{\sqrt{2}}{3}\sum_{n=2}^{\infty}(-1)^n\frac{1}{\sqrt{n}}$ exists and that $f$ is uniformly continuous. On the other hand $f\not\in L^2(0,\infty)$ and $f'\not\in L^{\infty}(a,\infty)$ for any $a\geqslant0$.
\end{ex}

\medskip

For given $1\leqslant p<\infty$, the function $f$ in Example \ref{EXp} can easily be modified such that $f\not\in L^p(a,\infty)$ holds. With some additional work, it is also possible to construct a single $f$ such that $f\not\in L^p(a,\infty)$ is true for all $1\leqslant p<\infty$. Finally, in all these cases $f$ can also be changed into a $C^{\infty}$--function; $|f'|$ is then bounded on any finite interval, but unbounded at infinity. Thus, for every $p\in[1,\infty)$ and $q\in(1,\infty]$ there is a function $f$ to which Barb\u{a}lat's Lemma can be applied, but which fails the assumptions of Theorem \ref{THM-2}.

\medskip

Concerning the other direction, it is easy to construct a function $f$ that satisfies the condition of \cite[Lemma 1]{Tao}, i.e., $f\in L^2(0,\infty)$ and $f'\in L^{\infty}(0,\infty)$, but whose improper Riemann integral $\int_0^{\infty}f(t) \dd t$ does not exist. Thus, Tao's alternative is incomparable with the original Barb\u{a}lat Lemma. The same is true for the statement of Theorem \ref{THM-2} whenever $p\not=1$. Only in the case $p=1$, Theorem \ref{THM-2} is a special case of Barb\u{a}lat. Let $f\in W^{1,1,q}$ with $q\in(1,\infty]$. By Lemma \ref{LEM-2}, $f$ is uniformly continuous and bounded. Therefore, $f\in L^1(0,\infty)$ implies that the improper Riemann integral $\int_0^{\infty}f(t)\dd t$ exists.


\bigskip

\section{Rates of convergence}\label{SEC-III}

In this section we use the methods of Section \ref{SEC-I} to derive estimates for the speed of decay in the previous versions of Barb\u{a}lat's Lemma so as to make the statement quantitative. We start with  a modification of Theorem \ref{THM-1}.

\smallskip

\begin{thm}\label{THM-3} Suppose that for $f\colon[0,\infty)\rightarrow\mathbb{R}$ the limit $\lim_{t\rightarrow\infty}\int_0^tf(\tau)\dd\tau$ exists, and that $f$ is H\"older continuous of order $\alpha\in(0,1]$, i.e., $\omega(\tau)=c\tau^{\alpha}$ is a modulus of continuity for a constant $c\geqslant0$.  Then we have $|f(t)|\leqslant(1+c)S(t)^{\alpha/(1+\alpha)}$ for $t\geqslant0$, where $S(t)=\sup_{s\geqslant{}t}|\int_t^sf(\tau)\dd\tau|$.
\end{thm}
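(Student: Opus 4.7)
The plan is to repeat the argument of Lemma \ref{LEM-1}, but instead of hard-coding the choice $s=S(t)^{1/2}$ at the start, to leave the auxiliary length $s>0$ free and optimize it at the end against the concrete Hölder modulus $\omega(\tau)=c\tau^{\alpha}$. This turns the two-term bound $S(t)/s+\omega(s)$ into a function of $s$ which, thanks to the explicit form of $\omega$, can be balanced exactly.

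First I would handle the trivial cases: if $S(t)=\infty$ there is nothing to prove, and if $S(t)=0$ then $\int_t^s f(\tau)\,\dd\tau=0$ for every $s\geqslant t$, which upon differentiation forces $f\equiv 0$ on $[t,\infty)$ and in particular $f(t)=0$. So assume $0<S(t)<\infty$ and fix an arbitrary $s>0$. Exactly as in the proof of Lemma \ref{LEM-1}, I write
\begin{equation*}
|f(t)|=\frac{1}{s}\Bigl|\int_t^{t+s}f(t)\,\dd\tau\Bigr|
\leqslant \frac{1}{s}\Bigl|\int_t^{t+s}f(\tau)\,\dd\tau\Bigr|
+\frac{1}{s}\Bigl|\int_t^{t+s}\bigl(f(t)-f(\tau)\bigr)\,\dd\tau\Bigr|
\leqslant \frac{S(t)}{s}+\omega(s),
\end{equation*}
where in the last step the first summand is estimated by the definition of $S(t)$ (since $t+s\geqslant t$) and the second by the modulus of continuity $\omega(\tau)=c\tau^{\alpha}$.

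At this point the argument diverges from Lemma \ref{LEM-1}: rather than choosing $s=S(t)^{1/2}$, I would pick $s=S(t)^{1/(1+\alpha)}$, which is the value that equalizes the two summands $S(t)/s$ and $cs^{\alpha}$ up to the constant $c$. Substituting gives
\begin{equation*}
|f(t)|\leqslant S(t)^{\alpha/(1+\alpha)}+c\,S(t)^{\alpha/(1+\alpha)}=(1+c)\,S(t)^{\alpha/(1+\alpha)},
\end{equation*}
which is the asserted estimate. I expect no real obstacle here; the only nontrivial observation is that the exponent $\alpha/(1+\alpha)$ in the conclusion is dictated by the balancing condition $S(t)/s=cs^{\alpha}$, and that the choice $\alpha=1$, $c$ a Lipschitz constant, recovers the familiar rate $S(t)^{1/2}$ up to the factor $1+c$.
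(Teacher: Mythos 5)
Your proposal is correct and is essentially the paper's proof: the paper likewise just reruns the argument of Lemma \ref{LEM-1} with the choice $s=S(t)^{1/(1+\alpha)}$ in place of $S(t)^{1/2}$, balancing $S(t)/s$ against $cs^{\alpha}$. Your write-up merely makes explicit the details the paper leaves to the reader.
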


\begin{proof} It is enough to repeat the proof of Lemmas \ref{LEM-1}  and \ref{LEM-3} but with $s=S(t)^{1/(1+\alpha)}$.
\end{proof}

\smallskip

Next, we specialize to the situation of Theorem \ref{THM-2}.

\smallskip

\begin{cor}\label{COR-1} Let $f\in W^{1,p,q}(0,\infty)$ for some $p\in [1,\infty)$ and $q\in (1,\infty]$. Then we have $|f(t)|^p\leqslant(1+p\|f\|^{p-1}_{\infty}\|f'\|_{q})S(t)^{(q-1)/(2q-1)}$ for $t\geqslant0$, where $S(t)=\int_t^{\infty}|f(\tau)|^p\dd\tau$.
\end{cor}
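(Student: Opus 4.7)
The plan is to deduce this corollary directly from Theorem \ref{THM-3} applied to the non-negative function $g(t):=|f(t)|^p$. Three ingredients need to be assembled.

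First, I would identify a H\"older modulus of continuity for $g$. Combining the mean-value estimate \eqref{eq:meanvalue} from the proof of Theorem \ref{THM-2} with the H\"older estimate for $f$ itself provided by Lemma \ref{LEM-3} (with exponent $1/q' = (q-1)/q$ and constant $\|f'\|_q$), one obtains
$$
\bigl||f(x)|^p - |f(y)|^p\bigr| \;\leq\; p\|f\|_\infty^{p-1}\,\|f'\|_q\,|x-y|^{(q-1)/q}
$$
for all $x,y\in[0,\infty)$. Thus $g=|f|^p$ is H\"older continuous of order $\alpha:=(q-1)/q\in(0,1]$ with constant $c:=p\|f\|_\infty^{p-1}\|f'\|_q$.

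Second, I would verify that the hypothesis of Theorem \ref{THM-3} on the integral is met. Since $f\in L^p(0,\infty)$, the indefinite integral $t\mapsto\int_0^t g(\tau)\dd\tau$ converges as $t\to\infty$. Moreover, because $g\geqslant 0$, the function $s\mapsto\int_t^s g(\tau)\dd\tau$ is non-decreasing, so the quantity appearing in Theorem \ref{THM-3} simplifies to
$$
\sup_{s\geqslant t}\Bigl|\int_t^s g(\tau)\dd\tau\Bigr|=\int_t^\infty |f(\tau)|^p\dd\tau = S(t),
$$
which is exactly the $S(t)$ in the statement of the corollary.

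Finally, I would apply Theorem \ref{THM-3} to $g$ with the above $\alpha$ and $c$, giving $|g(t)|\leqslant(1+c)S(t)^{\alpha/(1+\alpha)}$. A short computation yields $\alpha/(1+\alpha)=(q-1)/(2q-1)$, and substituting the value of $c$ produces exactly the claimed bound. There is no real obstacle here beyond the bookkeeping of exponents; the only point worth flagging is that the two auxiliary estimates (from Lemma \ref{LEM-3} and from \eqref{eq:meanvalue}) have to be composed in the right order, and that one must observe that $S(t)$ as defined in the corollary coincides with the $S(t)$ of Theorem \ref{THM-3} thanks to the non-negativity of $|f|^p$.
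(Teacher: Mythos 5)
Your argument is correct and follows the same route as the paper: obtain the H\"older modulus $\omega(\tau)=p\|f\|_\infty^{p-1}\|f'\|_q\tau^{(q-1)/q}$ for $|f|^p$ by composing \eqref{eq:meanvalue} with Lemma \ref{LEM-3}, then apply Theorem \ref{THM-3} with $\alpha=(q-1)/q$. Your extra remark that the supremum defining $S(t)$ in Theorem \ref{THM-3} collapses to $\int_t^\infty|f(\tau)|^p\dd\tau$ by non-negativity is a detail the paper leaves implicit, but it is the same proof.
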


\begin{proof} By Lemma \ref{LEM-3}, under our assumptions, $f$ is bounded, and $\tau\mapsto \|f'\|_{q}\tau^{(q-1)/q}$ is a modulus of continuity for $f$. As in equation \eqref{eq:meanvalue} in the proof of Theorem \ref{THM-2}, we conclude that $\omega(\tau)=p\|f\|_\infty^{p-1}\|f'\|_{q}\tau^{(q-1)/q}$ is a modulus of continuity of $|f|^p$. It is therefore enough to apply Theorem \ref{THM-3} to the latter function and to $\alpha={(q-1)}/{q}$ to obtain the assertion, because then $\alpha/(1+\alpha)=(q-1)/(2q-1)$.
\end{proof}

\smallskip

We point out that Corollary \ref{COR-1} contains the quantitative versions of the results of Tao \cite[Lemma 1]{Tao}, Desoer and Vidyasagar \cite[Ex.~1 on p.~237]{DV}, and Teel \cite[Fact 4]{Teel}.

\bigskip

We finish this short note by illustrating how Barb\u{a}lat's Lemma, and its variations are typically used in the control theoretic literature, for instance, to obtain (asymptotic) stability of solutions of ordinary differential equations. The next example is taken from Hou, Duan, Guo \cite[Example 3.1]{HDG}.
\smallskip
\begin{ex}\label{EX} Consider the system
$$
\begin{cases}
\; \dot{e}(t) = -e(t)+\theta(t)\omega(t),\\
\; \dot{\theta}(t) = -e(t)\omega(t),
\end{cases}
$$
with $\omega$ bounded and continuous. For some solution $(e,\theta)$ of this equation we define $V=e^2+\theta^2$ and compute $\dot{V}(t)=-2e^2(t)\leqslant0$ for every $t\geqslant0$. Thus, $V$ and therefore $e$ and $\theta$ are bounded. We put $f:=2e^2$ and in view of the positivity of $V$ and $f$ we obtain
$$
\int_0^Rf(t)\dd{}t \, = \int_0^R-\dot{V}(t)\dd{}t = -V(R)+V(0)\leqslant V(0),
$$
so that $f\in L^1(0,\infty)$.  Since  $\dot{f}(t)=4\dot{e}(t)e(t)=-4e(t)^2+4\theta(t)e(t)\omega(t)$ holds and $\theta,e,\omega$ are bounded, we conclude $\dot{f}\in L^{\infty}(0,\infty)$. That is $f\in W^{1,1,\infty}(0,\infty)$ and by Theorem \ref{THM-2} it follows $f(t)\rightarrow0$ for $t\rightarrow\infty$. From the definition of $f$ we finally obtain that also $e(t)\rightarrow0$ for $t\rightarrow\infty$.
\end{ex}

\medskip

{

\footnotesize

{\sc Acknowledgements. }The authors would like to thank the referees and the editor for their careful work and their valuable comments.

}

\bibliographystyle{amsplain}


\end{document}